\newtheorem{theorem}{Theorem}[section]
\newtheorem{lemma}[theorem]{Lemma}
\theoremstyle{definition}
\numberwithin{equation}{section}
\begin{document}

\title[ Sharp Riesz-Fej\'er inequality for harmonic functions]{Sharp Riesz-Fej\'er inequality for harmonic Hardy spaces }

\author{Petar  Melentijevi\'{c}}
\address{
Faculty of    Mathematics\\
University of Belgrade\\
Studentski trg 16\\
11000 Beograd\\
Serbia\\
}\email{petarmel@matf.bg.ac.rs}

\author{Vladimir Bo\v{z}in}
\address{
	Faculty of    Mathematics\\
	University of Belgrade\\
	Studentski trg 16\\
	11000 Beograd\\
	Serbia\\
}\email{bozinv@turing.mi.sanu.ac.rs}
\thanks{The first author is partially supported by MPNTR grant 174017, Serbia, the second author is partially supported bz MPNTR grant 174032}

\begin{abstract}
We prove sharp version of Riesz-Fej\'er inequality for functions in harmonic Hardy space $h^p(\mathbb{D})$ on the unit disk $\mathbb{D}$, for $p>1,$ thus extending the result from \cite{KPK}  and resolving the posed conjecture.

\end{abstract}

\subjclass[2010]{Primary 31A05, Secondary 30H10}

\keywords{Riesz-Fej\'er inequality, Schur test, harmonic functions, sharp estimates}

\maketitle

\section{Introduction}

Let $\mathbb{D}$ denote the unit disk in the complex plane. For holomorphic or harmonic function $f$ with $M_{p}(r,f)$ we denote the integral means:

$$M_{p}(r,f)=\bigg(\frac{1}{2\pi}\int_{0}^{2\pi}|f(re^{i \theta})|^pd\theta\bigg)^{\frac{1}{p}}, \quad 0<p<\infty$$

The space of all holomorphic functions for which $M_{p}(r,f)$ is bounded for $0<r<1$ is the Hardy space $H^p(\mathbb{D}),$ while the analogous space of harmonic functions is the harmonic Hardy space $h^p(\mathbb{D}).$ Theory of Hardy spaces is a very well developed; for further background about these spaces, we refer reader, for instance, to the books \cite{KOOSIS} and \cite{PAVLOVIC}.

One of the interesting results in this theory is the following inequality of Riesz and Fej\'er from \cite{RIESZFEJER}:

$$\int_{-1}^{1} |f(r)|^p dr \leq \frac{1}{2} \int_{0}^{2\pi} |f(e^{i\theta})|^p d\theta, $$
that holds for a function $f \in H^{p}(\mathbb{D})$ for every $0<p<\infty,$ where the values $f(e^{i\theta})$ denote the radial limits of the function $f.$

This inequality was generalized in several directions. Let us mention Beckenbach's results: the same inequality holds where in place of $|f|^p$ we have a positive logarithmically subharmonic function. Some of generalizations can be found in \cite{BECKENBACH}, \cite{CALDERON} and \cite{HUBER}. 

A recent significant result is an analog of this inequality for harmonic Hardy spaces, proved by Kayumov et al. Namely, they proved the next version of Riesz-Fej\'er inequality:

$$\int_{-1}^{1}|f(re^{is})|^p dr\leq K_{p}  \int_{0}^{2\pi}|f(e^{i\theta})|^p d\theta,$$
for all $s \in [0,2\pi]$ with $K_{p}=\frac{1}{2\cos^p\frac{\pi}{2p}}$ for $1<p<2$ and $K_{p}=1$ for $p\geq2.$ The inequality is sharp for $p \in (1,2]$ and the authors made a conjecture that the inequality holds with  $K_{p}=\frac{1}{2\cos^p\frac{\pi}{2p}}$ for all $1<p<\infty.$ They also proved $K_{p}\geq \frac{1}{2\cos^p\frac{\pi}{2p}}$ for these $p,$ so the inequality with this $K_{p}$ would be the optimal one. The inequality for $1<p<2$ depends on an inequality of Kalaj, proved in \cite{KALAJ} and Lozinski's inequality from \cite{LOZINSKI}. The proof of the first of these inequalities uses the plurisubharmonic method invented in \cite{HOLLENBECKVERBITSKY}; recent update on this method can be found in \cite{MELENTIJEVICMARKOVIC}. 
The proof of Riesz-Fej\'er inequality for $p>2$ uses a result of Frazer from \cite{FRAZER}.

The purpose of this paper is to prove the sharp version of  Riesz-Fej\'er inequality for harmonic Hardy spaces for every $1<p<\infty$ using Schur test for Poisson extension operator. Namely, we get the following theorem:

\begin{theorem}

	For all $1<p<\infty$ and $f \in h^p(\mathbb{D})$, we have:
$$\int_{-1}^{1}|f(re^{is})|^p dr\leq  \frac{1}{2\cos^p\frac{\pi}{2p}} \int_{0}^{2\pi}|f(e^{i\theta})|^p d\theta,$$
with $s \in [0,2\pi].$
\end{theorem}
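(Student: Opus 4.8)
The plan is to recast the inequality as a bound for the operator norm of the Poisson extension from the circle to the diameter, and to estimate that norm by a weighted Schur test. First I would use rotation invariance: replacing $f$ by $z\mapsto f(e^{is}z)$ changes neither side, so it suffices to take $s=0$. Since $p>1$, every $f\in h^p(\mathbb D)$ is the Poisson integral of a boundary function $f^*\in L^p(\partial\mathbb D)$ with $\int_0^{2\pi}|f^*(e^{i\theta})|^p\,d\theta=\int_0^{2\pi}|f(e^{i\theta})|^p\,d\theta$, and for $r\in(-1,1)$
\[
f(r)=\frac1{2\pi}\int_0^{2\pi}P(r,\theta)f^*(e^{i\theta})\,d\theta,\qquad P(r,\theta)=\frac{1-r^2}{1-2r\cos\theta+r^2},
\]
where negative $r$ encodes the diametrically opposite boundary point. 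The theorem is therefore equivalent to $\|T\|_{L^p(d\theta)\to L^p(dr)}^p\le\frac1{2\cos^p\frac\pi{2p}}$ for $(Tg)(r)=\frac1{2\pi}\int_0^{2\pi}P(r,\theta)g(\theta)\,d\theta$.

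To bound $\|T\|$ I would use the weighted Schur test: if $\phi>0$ on $(-1,1)$, $\psi>0$ on $[0,2\pi)$ satisfy $\frac1{2\pi}\int_0^{2\pi}P(r,\theta)\psi(\theta)^q\,d\theta\le M_1\phi(r)^q$ and $\frac1{2\pi}\int_{-1}^1P(r,\theta)\phi(r)^p\,dr\le M_2\psi(\theta)^p$ (with $\tfrac1p+\tfrac1q=1$), then $\|T\|\le M_1^{1/q}M_2^{1/p}$. The crucial choice is the \emph{symmetric} pair
\[
\psi(\theta)^q=\big|\cot\tfrac\theta2\big|^{1/p}+\big|\tan\tfrac\theta2\big|^{1/p},\qquad \phi(r)^q=\Big(\tfrac{1+r}{1-r}\Big)^{1/p}+\Big(\tfrac{1-r}{1+r}\Big)^{1/p}.
\]
The first Schur inequality then holds with \emph{equality} and $M_1=\frac1{\cos\frac\pi{2p}}$: the functions $\operatorname{Re}\big(\frac{1+z}{1-z}\big)^{1/p}$ and $\operatorname{Re}\big(\frac{1-z}{1+z}\big)^{1/p}$ lie in $h^1$, on the diameter they equal $(\frac{1+r}{1-r})^{1/p}$ and $(\frac{1-r}{1+r})^{1/p}$, while on the circle the argument of $\frac{1\pm z}{1\mp z}$ is the constant $\pm\frac\pi2$, so their boundary values are exactly $\cos\frac\pi{2p}$ times $|\cot\frac\theta2|^{1/p}$ and $|\tan\frac\theta2|^{1/p}$. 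Reproducing $\phi(r)^q$ from $\psi(\theta)^q$ by the Poisson integral thus costs precisely the factor $\frac1{\cos\frac\pi{2p}}$, and the exponent $1/p$ is the one forced by the relation $\sin\frac\pi{2q}=\cos\frac\pi{2p}$.

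The second Schur inequality is the heart of the matter, and it is what forces the symmetrisation. Setting $x=\frac{1+r}{1-r}$ turns the diameter integral into
\[
\frac1{2\pi}\int_{-1}^1P(r,\theta)\phi(r)^p\,dr=\frac1\pi\int_0^\infty\frac{x\,(x^{1/p}+x^{-1/p})^{p/q}}{\big(\cos^2\frac\theta2+x^2\sin^2\frac\theta2\big)(1+x)^2}\,dx,
\]
in which the factor $(1+x)^{-2}$ is the non scale‑invariant Jacobian that blocks a closed‑form evaluation and is responsible for the constant $\tfrac12$. With an \emph{asymmetric} weight $\psi^q=|\cot\frac\theta2|^{1/p}$ the right‑hand side of the test would vanish at $\theta=\pi$ while the left‑hand side stays positive there, so the test fails at the endpoint $z=-1$; the symmetric choice repairs exactly this defect. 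To finish I would estimate $(x^{1/p}+x^{-1/p})^{p/q}$ by subadditivity of $t\mapsto t^{p/q}$ for $1<p\le2$ (so that $p/q=p-1\le1$), which reduces the endpoint behaviour to the scaling integral $\int_0^\infty\frac{y^{1/q-1}}{1+y^2}\,dy=\frac\pi{2\sin\frac\pi{2q}}=\frac\pi{2\cos\frac\pi{2p}}$ governing the limit $\theta\to0$, and then verify that the quotient of the above display by $\psi(\theta)^p$ has supremum over $\theta$ equal to that limiting value, namely $M_2=\frac1{2\cos\frac\pi{2p}}$. The range $p\ge2$ I would recover by duality, applying the same argument to the adjoint of $T$ at the conjugate exponent $q\in(1,2)$.

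Granting the two inequalities, $\|T\|\le M_1^{1/q}M_2^{1/p}=\frac1{2^{1/p}\cos\frac\pi{2p}}$, i.e. $\|T\|^p\le\frac1{2\cos^p\frac\pi{2p}}$, which is the asserted bound; its sharpness is already contained in \cite{KPK}. I expect the main obstacle to be precisely the last step of the third paragraph: showing that the one‑variable supremum in the second Schur inequality is attained only in the endpoint limit $\theta\to0$ (equivalently $\theta\to\pi$), so that the weight $(1+x)^{-2}$ cannot inflate the constant beyond the scale‑invariant value $\frac1{2\cos^p\frac\pi{2p}}$.
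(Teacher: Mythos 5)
Your strategy is in essence the paper's strategy: realize the left-hand side through the Poisson extension operator $T$ from $L^p$ of the circle to $L^p$ of the diameter, and control its norm by a Schur test with power-type weights whose harmonic extensions are computable in closed form (the paper uses the one-sided test of Howard--Schep with the single symmetric function $h(z)=\Re(1-z^2)^{-1/p}$, for which $Th(r)=(1-r^2)^{-1/p}$ exactly; your first Schur condition, which holds with equality for $\Re\bigl(\tfrac{1\pm z}{1\mp z}\bigr)^{1/p}$, plays precisely the role of that identity). The problem is that your proposal stops short of proving the one inequality that carries all the mathematical content: the second Schur condition
$$\frac{1}{2\pi}\int_{-1}^{1}P(r,\theta)\,\phi(r)^p\,dr\;\le\;\frac{1}{2\cos\frac{\pi}{2p}}\,\psi(\theta)^p .$$
You reduce it (after subadditivity) to the assertion that a certain one-variable quotient attains its supremum only in the limits $\theta\to 0,\pi$, and you yourself flag this as the main obstacle; no argument for it is offered. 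This is exactly the step the paper has to work for: after the substitution $\frac{1+r}{1-r}=y\cot\frac{\theta}{2}$ and the crude bound $\cos^{p-1}\bigl(\frac{\pi}{2p}-\frac{\theta}{p}\bigr)\ge\cos^{p-1}\frac{\pi}{2p}$, everything reduces to $F(\theta)=\int_0^{\pi/2}\sin^{1/p}x\,\cos^{1/p}x\,\sin^{-2/p}\bigl(x+\tfrac{\theta}{2}\bigr)\,dx\le F(0)=F(\pi)$, which the paper proves by showing $F''\ge 0$ (differentiation under the integral) and evaluating the endpoints by the Beta function. Without an analogous mechanism --- convexity, monotonicity, or some substitute --- what you have is a plan, not a proof; note also that subadditivity only weakens the left-hand side, while the right-hand side $\psi^p$ cannot be correspondingly simplified (the inequality $(a+b)^{p-1}\le a^{p-1}+b^{p-1}$ goes the wrong way there), so the post-subadditivity inequality is still sharp at the endpoints and still genuinely hard.

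The second defect is the treatment of $p\ge 2$. Subadditivity of $t\mapsto t^{p/q}$ requires $p/q=p-1\le 1$, and the proposed fix --- ``apply the same argument to the adjoint of $T$ at the conjugate exponent'' --- is circular. Duality gives $\|T\|_{L^p\to L^p}=\|T^*\|_{L^q\to L^q}$, but $T^*$ is a genuinely different operator (it integrates over the diameter and produces a function on the circle), so the already-treated case $q\in(1,2)$ of the theorem, which concerns $T$ and not $T^*$, tells you nothing about it. If instead you run a Schur test for $T^*$ at exponent $q$, the two required inequalities are literally the same pair as for $T$ at exponent $p$ with the roles of the two weights exchanged: the ``nice'' condition is again the Poisson-extension identity on the circle weight, and the ``hard'' condition again involves raising the diameter weight to the outer exponent $p/q=p-1>1$, where subadditivity fails exactly as before. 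So the duality step buys nothing, and your argument as written covers at most $1<p\le 2$, which is the range already settled in \cite{KPK}. The paper's convexity lemma avoids any case split and handles all $1<p<\infty$ uniformly; to complete your version you would need a proof of the second Schur condition that does not pass through subadditivity.
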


Because of the rotational invariance of norm of functions in $h^p(\mathbb{D}),$ we can consider only the case of $s=0,$ without any loss of generality. 

\section{Proof of the main theorem}

We will prove Theorem 1.1 using the following version of Schur test as can be found in \cite{HOWARDSCHEP}:

\begin{lemma}
 Let $X$ and $Y$ be measure spaces equipped with nonnegative, $\sigma-$finite measures and let $T$ be an operator from $L^p(Y)$ to $L^p(X)$ that can be expressed as
 
 $$Tf(x)=\int_{Y} K(x,y)f(y) dy $$ 
 for some nonnegative function $K(x,y).$ The adjoint operator $T^{*}$ is now given by 
 
  $$T^{*}f(y)=\int_{X} K(x,y)f(x) dx. $$ 
 
If we can find a measurable $h$ finite almost everywhere, such that:
$$ T^{*}((Th)^{p-1}) \leq C_{p} h^{p-1},\quad  \text{a.e. on Y} $$

then for all $f \in L^p(Y),$ we have:

$$ \int_{X}|T(f)|^p dx \leq C_{p}\int_{Y}|f|^p dy.  $$
\end{lemma}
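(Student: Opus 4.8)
The plan is to derive the claimed $L^p$ bound directly from the hypothesis $T^*((Th)^{p-1}) \le C_p h^{p-1}$ by one application of H\"older's inequality followed by Tonelli's theorem, which is the classical mechanism behind the Schur test. I assume, as is standard, that $h>0$ a.e. on $Y$, so that the negative powers of $h$ appearing below are meaningful; since $h$ is finite a.e. by hypothesis, the exceptional set where $h$ vanishes or is infinite is handled separately among the degenerate cases. I also reduce at once to \emph{nonnegative} data: because $K\ge 0$, we have $|Tf(x)|\le\int_Y K(x,y)|f(y)|\,dy = T|f|(x)$ pointwise, while $\||f|\|_p=\|f\|_p$, so it suffices to prove the inequality for $f\ge 0$.

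The key step is a pointwise estimate on $Tf(x)$ obtained by inserting the test function into the kernel. Writing $q=p/(p-1)$ for the conjugate exponent, I factor the integrand as
$$K(x,y)f(y) = \bigl[K(x,y)h(y)\bigr]^{1/q}\cdot\bigl[K(x,y)h(y)\bigr]^{1/p}\frac{f(y)}{h(y)},$$
and apply H\"older's inequality in the variable $y$ with exponents $q$ and $p$. This yields
$$Tf(x) \le \Bigl(\int_Y K(x,y)h(y)\,dy\Bigr)^{1/q}\Bigl(\int_Y K(x,y)\,h(y)^{1-p}f(y)^p\,dy\Bigr)^{1/p},$$
where the first factor is exactly $(Th)(x)^{(p-1)/p}$. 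Raising to the $p$-th power gives the pointwise bound
$$Tf(x)^p \le (Th)(x)^{p-1}\int_Y K(x,y)\,h(y)^{1-p}f(y)^p\,dy.$$

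Finally I integrate this over $X$ and invoke Tonelli's theorem, which applies because the integrand is nonnegative and both measures are $\sigma$-finite, to interchange the order of integration:
$$\int_X Tf(x)^p\,dx \le \int_Y h(y)^{1-p}f(y)^p\Bigl(\int_X K(x,y)(Th)(x)^{p-1}\,dx\Bigr)dy = \int_Y h(y)^{1-p}f(y)^p\,T^*\bigl((Th)^{p-1}\bigr)(y)\,dy.$$
Inserting the hypothesis $T^*((Th)^{p-1})\le C_p\,h^{p-1}$ collapses the weights, since $h^{1-p}\cdot h^{p-1}=1$, and leaves $\int_X Tf(x)^p\,dx \le C_p\int_Y f(y)^p\,dy$, which is the assertion.

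The analytic content is light, so I do not expect a serious obstacle; the only genuine care is the bookkeeping at degenerate sets. I would observe that where $Th=0$ the pointwise bound forces $Tf=0$, that the null set on which $h\in\{0,\infty\}$ contributes nothing to the integrals, and that the a.e. validity of the hypothesis is enough since it is only ever used under an integral sign. The single point one must get exactly right is the split of the kernel in the H\"older step: it has to be arranged so that the first factor reproduces $(Th)^{(p-1)/p}$ and the residual weight $h^{1-p}$ cancels precisely against the $h^{p-1}$ supplied by the hypothesis, leaving the clean constant $C_p$.
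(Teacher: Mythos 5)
Your proof is correct: the factorization $K(x,y)f(y)=\bigl[K(x,y)h(y)\bigr]^{1/q}\cdot\bigl[K(x,y)h(y)\bigr]^{1/p}f(y)/h(y)$, H\"older's inequality in $y$, and Tonelli to produce $T^{*}\bigl((Th)^{p-1}\bigr)$ under the $y$-integral is exactly the classical Schur-test mechanism, and your cancellation $h^{1-p}\cdot h^{p-1}=1$ lands on the stated constant $C_p$. Note that the paper itself offers no proof of this lemma --- it is quoted from Howard and Schep \cite{HOWARDSCHEP} --- and your argument is precisely the standard one found there, including the positivity assumption $h>0$ a.e., which you rightly make explicit since the statement's ``finite almost everywhere'' alone would not make the powers $h^{1-p}$ meaningful.
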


We apply the Schur test in the following setting. For spaces $X$ and $Y$ we set $X=[-1,1]$ with Lebesgue measure and $Y=\mathbb{T}=\partial\mathbb{D}$ with normalised arclength measure. Starting from a harmonic $f \in h^p(\mathbb{D}),$ we first get the appropriate $f^*(e^{i\theta}) \in L^p(\mathbb{T}),$ defined by its radial limits. Now, by acting with the operator $T$ of Poisson harmonic extension, we get:

$$  Tf^{*}(r)=\int_{0}^{2\pi}\frac{1-r^2}{1-2r\cos\theta+r^2}f^{*}(e^{i\theta}) \frac{d\theta}{2\pi}, $$
which is equal to $f(r),$ because of harmonicity of $f.$ Hence, we easily see that the optimal constant in Riesz-Fej\'er inequality is equal to the $p-$th power of the operator norm of such $T.$ Since we consider $\mathbb{T}$ with normalised measure we have to find an $h$ such that the constant $C_{p}$ is equal to $\frac{\pi}{\cos^p\frac{\pi}{2p}}.$ Also,  $T$ has  positive kernel $K(r,\theta)=\frac{1-r^2}{1-2r\cos\theta+r^2},$ and therefore, it follows that

$$T^{*}f(e^{i\theta})=\int_{-1}^{1} \frac{1-r^2}{1-2r\cos\theta+r^2}f(r) dr.$$

We will work with $h(z)= \Re (1-z^2)^{-\frac{1}{p}}.$ It is easy to find to its values on the unit circle so that

$$\Re (1-e^{2i\theta})^{-\frac{1}{p}}= \Re (2 \sin \theta e^{i(\theta-\frac{\pi}{2})})^{-\frac{1}{p}}=2^{-\frac{1}{p}}\sin^{-\frac{1}{p}}\theta \cos\bigg(\frac{\pi}{2p}-\frac{\theta}{p}\bigg),$$
for $0\leq\theta\leq \pi,$

$$\Re (1-e^{2i\theta})^{-\frac{1}{p}}= \Re (2 \sin \theta e^{i(\theta-\frac{\pi}{2})})^{-\frac{1}{p}}=2^{-\frac{1}{p}}|\sin\theta|^{-\frac{1}{p}} \cos\bigg(\frac{\pi}{2p}-\frac{\theta-\pi}{p}\bigg),$$
for $\pi \leq\theta\leq 2\pi,$ while on the real line we have

$$ \Re (1-z^2)^{-\frac{1}{p}}= (1-r^2)^{-\frac{1}{p}}.$$

From the fact that

$$ Th(r)=\frac{1}{2\pi}\int_{0}^{2\pi}\frac{1-r^2}{1-2r\cos\theta+r^2}\Re(1-e^{2 i \theta})^{-\frac{1}{p}} d\theta=(1-r^2)^{-\frac{1}{p}},$$
for $-1<r<1,$ we find that

$$ T^{*}((Th)^{p-1})=\int_{-1}^{1} \frac{1-r^2}{1-2r\cos\theta+r^2} (1-r^2)^{-\frac{p-1}{p}} dr     $$
$$=\int_{-1}^{1} \frac{(1-r^2)^{\frac{1}{p}}}{1-2r\cos\theta+r^2}  dr.    $$

 Since $\int_{-1}^{1} \frac{(1-r^2)^{\frac{1}{p}}}{1-2r\cos(\theta+\pi)+r^2} dr=\int_{-1}^{1} \frac{(1-r^2)^{\frac{1}{p}}}{1+2r\cos\theta+r^2} dr, $
and substituting $r$ with $-r$ in the last integral, we easily see that it is enough to prove
 
  $$\int_{-1}^{1} \frac{(1-r^2)^{\frac{1}{p}}}{1-2r\cos\theta+r^2} dr \leq \frac{\pi}{\cos^p{\frac{\pi}{2p}}} 2^{-\frac{p-1}{p}}\sin^{-\frac{p-1}{p}}\theta \cos^{p-1}\bigg(\frac{\pi}{2p}-\frac{\theta}{p}\bigg),$$
 for $0<\theta<\pi, $ i.e.  $T^{*}((Th)^{p-1}) \leq C_{p} h^{p-1} $ almost everywhere on $0<\theta<\pi $ and consequently on the whole domain. 

Introducing a change of variables $\frac{1+r}{1-r}=y\cot\frac{\theta}{2}$ in the integral, we have

$$\int_{-1}^{1} \frac{(1-r^2)^{\frac{1}{p}}}{1-2r\cos\theta+r^2} dr$$
$$=\int_{0}^{+\infty} \frac{\big[1-\big(\frac{y\cot\frac{\theta}{2}-1}{y\cot\frac{\theta}{2}+1}\big)^2\big]^{\frac{1}{p}}}{1-2\cos\theta\frac{y\cot\frac{\theta}{2}-1}{y\cot\frac{\theta}{2}+1}+\big(\frac{y\cot\frac{\theta}{2}-1}{y\cot\frac{\theta}{2}+1}\big)^2} \frac{2\cot\frac{\theta}{2}}{(y\cot\frac{\theta}{2}+1)^2} d y$$
$$=4^{\frac{1}{p}}\cot^{1+\frac{1}{p}}\frac{\theta}{2}\int_{0}^{+\infty} \frac{y^{\frac{1}{p}}}{y^2(1-\cos\theta)\cot^2\frac{\theta}{2}+1+\cos\theta} \frac{dy}{(y\cot\frac{\theta}{2}+1)^{\frac{2}{p}}}$$
$$=\frac{4^{\frac{1}{p}}\cot^{1+\frac{1}{p}}\frac{\theta}{2}}{2\cos^2\frac{\theta}{2}}  \int_{0}^{+\infty} \frac{y^{\frac{1}{p}}}{y^2+1} \frac{dy}{(y\cot\frac{\theta}{2}+1)^{\frac{2}{p}}} .                       $$

Hence, we obtain

$$ \sin^{\frac{p-1}{p}}\theta \int_{-1}^{1} \frac{(1-r^2)^{\frac{1}{p}}}{1-2r\cos\theta+r^2} dr$$
$$=  \frac{4^{\frac{1}{p}}\cot^{1+\frac{1}{p}}\frac{\theta}{2}\sin^{\frac{p-1}{p}}\theta}{2\cos^2\frac{\theta}{2}}  \int_{0}^{+\infty} \frac{y^{\frac{1}{p}}(y^2+1)^{-1}}{(y\cot\frac{\theta}{2}+1)^{\frac{2}{p}}}  dy $$
$$=2^{\frac{1}{p}}\int_{0}^{+\infty} \frac{y^{\frac{1}{p}}(y^2+1)^{-1}}{(y\cos\frac{\theta}{2}+\sin\frac{\theta}{2})^{\frac{2}{p}}}  dy                          $$

The main inequality is equivalent, now, with:

$$\int_{0}^{+\infty} \frac{y^{\frac{1}{p}}(y^2+1)^{-1}}{(y\cos\frac{\theta}{2}+\sin\frac{\theta}{2})^{\frac{2}{p}}}  dy  \leq \frac{\pi}{2\cos^p\frac{\pi}{2p}} \cos^{p-1}\bigg(\frac{\pi}{2p}-\frac{\theta}{p}\bigg)                       $$
 
 Since $\cos^{p-1}\bigg(\frac{\pi}{2p}-\frac{\theta}{p}\bigg) \geq \cos^{p-1}\frac{\pi}{2p},$ it is enough to prove:
 
$$F(\theta)=\int_{0}^{+\infty} \frac{y^{\frac{1}{p}}(y^2+1)^{-1}}{(y\cos\frac{\theta}{2}+\sin\frac{\theta}{2})^{\frac{2}{p}}}  dy  \leq \frac{\pi}{2\cos\frac{\pi}{2p}}. $$

The proof follows from the next two lemmas.

\begin{lemma}
	\label{Lemma 1}

Function $F(\theta)$ is convex on $[0,\pi]$ and

	$$F(0)=F(\pi)=\frac{\pi}{2\cos\frac{\pi}{2p}}.$$	
\end{lemma}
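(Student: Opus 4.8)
The plan is to treat the two claims separately: the endpoint evaluations are a direct computation, while convexity follows from differentiating under the integral sign and exploiting a simple second-order ODE satisfied by the integrand.

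For the boundary values, I would first note the symmetry under $y \mapsto 1/y$. At $\theta = \pi$ we have $\cos\frac{\pi}{2}=0$ and $\sin\frac{\pi}{2}=1$, so $F(\pi)=\int_0^{\infty}\frac{y^{1/p}}{y^2+1}\,dy$, whereas at $\theta=0$ one gets $F(0)=\int_0^{\infty}\frac{y^{-1/p}}{y^2+1}\,dy$; the change of variables $y\mapsto 1/y$ carries one integral into the other, so $F(0)=F(\pi)$. To evaluate either, I would invoke the classical formula $\int_0^{\infty}\frac{y^{s-1}}{1+y^2}\,dy=\frac{\pi}{2\sin(\pi s/2)}$, valid for $0<s<2$. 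Taking $s=1+\frac1p$ (which lies in $(0,2)$ since $p>1$) gives $\sin\frac{\pi s}{2}=\sin\big(\frac{\pi}{2}+\frac{\pi}{2p}\big)=\cos\frac{\pi}{2p}$, whence $F(0)=F(\pi)=\frac{\pi}{2\cos\frac{\pi}{2p}}$, as claimed.

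For convexity, write $u(\theta,y)=y\cos\frac{\theta}{2}+\sin\frac{\theta}{2}$, so that the integrand is $\frac{y^{1/p}}{y^2+1}\,u^{-2/p}$. The crucial observation is that, for fixed $y$, the function $u$ satisfies $\partial_\theta^2 u=-\frac14 u$. Differentiating $v:=u^{-2/p}$ twice in $\theta$ and substituting this relation yields
\[
\partial_\theta^2 v=\frac{2}{p}\Big(\frac{2}{p}+1\Big)u^{-\frac{2}{p}-2}(\partial_\theta u)^2+\frac{1}{2p}\,u^{-\frac{2}{p}},
\]
which is a sum of two nonnegative terms whenever $u>0$, i.e. for all $y>0$ and $\theta\in(0,\pi)$. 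Since the weight $\frac{y^{1/p}}{y^2+1}$ is positive, integrating this identity in $y$ gives $F''(\theta)\geq 0$ on $(0,\pi)$, and continuity of $F$ on $[0,\pi]$ then upgrades this to convexity on the closed interval.

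The main technical obstacle is justifying differentiation under the integral sign, in particular producing a dominating function uniform in $\theta$ on compact subsets of $(0,\pi)$; the potential difficulty is the behavior of $u^{-2/p}$ near $y=0$ as $\theta\to 0$ (where $\sin\frac{\theta}{2}\to 0$) and the growth of $\partial_\theta u$ in $y$ at infinity. Both are controlled: near $y=0$ the factor $y^{1/p}$ together with $u\geq \sin\frac{\theta}{2}>0$ keeps the integrand integrable, while at infinity the extra powers of $y$ produced by differentiation are absorbed by the decay $u^{-2/p}\sim (y\cos\frac{\theta}{2})^{-2/p}$ and the factor $(y^2+1)^{-1}$. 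Working on a compact subinterval $[\delta,\pi-\delta]$ and then letting $\delta\to 0$ via the continuity of $F$ resolves this cleanly, so the ODE identity $\partial_\theta^2 u=-\frac14 u$ is really the heart of the argument.
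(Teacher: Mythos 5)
Your proposal is correct and takes essentially the same route as the paper: the paper substitutes $x=\arctan y$ to write $F(\theta)=\int_0^{\pi/2}\sin^{1/p}x\cos^{1/p}x\,\bigl(\sin(x+\tfrac{\theta}{2})\bigr)^{-2/p}dx$ and differentiates twice under the integral sign, and since $\sin(x+\tfrac{\theta}{2})$ satisfies the same relation $\partial_\theta^2 u=-\tfrac14 u$ as your $u(\theta,y)$, its positive integrand $\frac{1}{2p}\bigl(\sin(x+\tfrac{\theta}{2})\bigr)^{-2/p-2}\bigl[(1+\tfrac2p)\cos^2(x+\tfrac{\theta}{2})+\sin^2(x+\tfrac{\theta}{2})\bigr]$ is exactly your two-term sum in the transformed variable, while the endpoint evaluation via the Beta function $\tfrac12 B(\tfrac12-\tfrac{1}{2p},\tfrac12+\tfrac{1}{2p})$ is the same Euler integral as your formula $\int_0^\infty y^{s-1}(1+y^2)^{-1}dy=\frac{\pi}{2\sin(\pi s/2)}$ with $s=1+\tfrac1p$. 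If anything, your explicit attention to justifying differentiation under the integral sign on compact subintervals and to continuity at the endpoints is more careful than the paper, which asserts convexity on the closed interval $[0,\pi]$ without comment.
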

\begin{proof}
First, we rewrite $F(\theta)$ in more suitable form. Changing variable with $x=\arctan y,$ we get:
\begin{multline}
F(\theta)=\int_{0}^{+\infty} \frac{y^{\frac{1}{p}}(y^2+1)^{-1}}{(y\cos\frac{\theta}{2}+\sin\frac{\theta}{2})^{\frac{2}{p}}}  dy =\int_{0}^{\frac{\pi}{2}}  \frac{\tan ^{\frac{1}{p}}x}{(\tan x \cos\frac{\theta}{2}+\sin\frac{\theta}{2})^{\frac{2}{p}}} dx          
\\ =\int_{0}^{\frac{\pi}{2}}  \frac{\tan ^{\frac{1}{p}}x\cos^{\frac{2}{p}}x}{(\sin x \cos\frac{\theta}{2}+\cos x\sin\frac{\theta}{2})^{\frac{2}{p}}} dx   =   \int_{0}^{\frac{\pi}{2}}  \frac{\sin ^{\frac{1}{p}}x\cos^{\frac{1}{p}}x}{\big(\sin (x+\frac{\theta}{2})\big)^{\frac{2}{p}}} dx.                                        
\end{multline}
Differentiating twice with respect to $\theta,$ we get:

$$F''(\theta)=\frac{1}{2p}\int_{0}^{\frac{\pi}{2}}\Phi(x,\theta) dx,$$
which is positive, since the integrand
 $$\Phi(x,\theta)=\frac{\sin ^{\frac{1}{p}}x\cos^{\frac{1}{p}}x}{\big(\sin (x+\frac{\theta}{2})\big)^{2+\frac{2}{p}}} \bigg[\big(1+\frac{2}{p}\big)\cos^2\big(x+\frac{\theta}{2}\big)+\sin^2\big(x+\frac{\theta}{2}\big)\bigg]$$
 is positive for all $x \in [0,\frac{\pi}{2}]$ and $\theta \in [0,\pi].$ Thus, $F(\theta)$ is convex on $[0,\pi].$

By (2.1) and change of variable $x=\frac{\pi}{2}-t,$ we get:
 $$F(0)=\int_{0}^{\frac{\pi}{2}}\sin ^{-\frac{1}{p}}x\cos^{\frac{1}{p}}x dx=\int_{0}^{\frac{\pi}{2}}\sin ^{\frac{1}{p}}t\cos^{-\frac{1}{p}}t dt=F(\pi).$$
 
Also, from the formula for Beta function we have: $F(0)=\frac{1}{2}B(\frac{1}{2}-\frac{1}{2p},\frac{1}{2}+\frac{1}{2p})=\frac{1}{2\sin(\frac{\pi}{2}-\frac{\pi}{2p})}=\frac{1}{2\cos\frac{\pi}{2p}}.$ 

\end{proof}

Using Lemma \ref{Lemma 1}, we easily finish the proof of the main inequality. Since $F(\theta)$  is convex, it attains its maximum at the end of the interval $[0,\pi],$ and by the same lemma its values at $0$ and $\pi$ are both equal to $\frac{1}{2\cos\frac{\pi}{2p}},$
hence $F(\theta) \leq \frac{1}{2\cos\frac{\pi}{2p}}.$

\textbf{Acknowledgements}. We wish to express our gratitude to the anonymous referee for his/her helpful comments that have improved the quality of the paper.

\end{document}